\theoremstyle{plain}
\newtheorem{theorem}{Theorem}[section]
\newtheorem*{theorem*}{Theorem}
\newtheorem{lemma}[theorem]{Lemma}
\theoremstyle{definition}
\theoremstyle{example}
\theoremstyle{note}
\theoremstyle{remark}
\numberwithin{equation}{section}
\newcommand{\HM}{\mathcal{H}}
\newcommand{\mass}{\mathsf{M}} 
\newcommand{\fn}{\mathsf{W}} 
\newcommand{\RC}{\mathscr{R}}
\newcommand{\PC}{\mathscr{P}} 
\newcommand{\FC}{\mathscr{F}} 
\DeclareMathOperator{\spt}{spt}
\DeclareMathOperator{\Lip}{Lip}
\DeclareMathOperator{\dist}{dist}
\DeclareMathOperator{\diam}{diam}
\DeclareMathOperator{\ap}{ap} 
\newcommand{\mr}{\mathop{\vrule height 1.6ex depth 0pt width
		0.13ex\vrule height 0.13ex depth 0pt width 1.3ex}\nolimits}
\newcommand{\ora}[1]{{\scriptstyle\xrightarrow{#1}}}
\begin{document}
\title{An inequality on the mass of image of rectifiable chain under chain map}
\author{Chunyan Liu}
\address{CHUNYAN LIU, SCHOOL OF MATHEMATICS AND STATISTICS,
		HUAZHONG UNIVERSITY OF SCIENCE
		AND TECHNOLOGY, 430074, WUHAN, P.R. CHINA}
\email{chunyanliu@hust.edu.cn}
\renewcommand{\thefootnote}{\fnsymbol{footnote}} 
\footnotetext{\emph{2010 Mathematics Subject Classification:}
		49Q15.}
\footnotetext{\emph{Key words:} Mass; Rectifiable chain; Lipschitz map}
\renewcommand{\thefootnote}{\arabic{footnote}} 
\date{}
\maketitle
\begin{abstract}
For a complete normed abelian group $G$, we show that the mass of image of a rectifiable $G$-chain $S$ under chain map $f_{\sharp}$ induced by Lipschitz map $f$ is controlled by the 
integral of Jacobi of $f$ restricted on the support of $S$ with respect to associated radon mesure $\mu_S$.
\end{abstract}
\section{Introduction}
In our previous paper \cite{Liu:2021}, we proved in Lemma (3.5)
that an inequality on the mass of the image of the rectifiable
$d$-chain $S$ under chain map $f_{\sharp}$, that is, 
\begin{equation}\label{Chan}
\mass(f_{\sharp}S)\leq\int\ap J_d(f|_{M})d\mu_S(x),
\end{equation}
where $f:\mathbb{R}^n\to\mathbb{R}^n$ is of $C^1$, $\mu_S$ is the 
Radon measure associated to chain $S$, and $M$ is a $d$-rectifiable set such that $\mu_S(\mathbb{R}^n\setminus M)=0$. We
further assumed that $P$ is a polyhedral $d$-chain, the inequality
\[
\mass(f_{\sharp}P)\leq\int \ap J_d(f|_{\spt P})(x)d\mu_P(x)
\]
holds for any Lipschitz map $f:\mathbb{R}^n\to\mathbb{R}^n$.
The estimation of mass of the image of flat chain with finite mass 
under induced chain map is important in dealing with the related problems of geometric measure theory. Given a flat $d$-chain $S$
with finite mass, a consequence on the mass of chain $f_{\sharp}S$ under chain map induced by Lipschitz map $f$ in \cite{Fleming:1966} provided that
\[
\mass(f_{\sharp}S)\leq\int\varphi^dd\mu_S(x)
\]
if $||Df(x)||\leq\varphi(x)$, where $\varphi$ is continuous on $\mathbb{R}^n$ and $Df$ denotes the differential. The most commonly
used form of this consequence is 
\[
\mass(f_{\sharp}S)\leq\Lip(f)^d\mass(S).
\]
The above inequality established the relationship between the
mass of $f_{\sharp}S$ under chain map and the mass of the original chain $S$. Since this estimation of the mass of flat $d$-chain $f_{\sharp}S$ is too large for some problems, the
representation of inequality \ref{Chan} is necessary. In this paper, our goal is to prove that 
the inequality \ref{Chan} is also true if we change any Lipschitz map by $C^1$ map. The main theorem of the paper is the following.
\begin{theorem}\label{theorem1}
	Let $f$ be a Lipschitz map of $\mathbb{R}^n$ into $\mathbb{R}^n$, and let $S\in\RC_d(\mathbb{R}^n;G)$. Suppose that $M=\spt S$, then
	\[
	\mass(f_{\sharp}S)\leq\int_{M}\ap J_d(f|_{M})(x)d\mu_S(x).
	\]
\end{theorem}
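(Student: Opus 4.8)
The plan is to deduce the Lipschitz statement from the $C^{1}$ case \eqref{Chan} already proved in \cite{Liu:2021}, by approximating $f$ --- in a Lusin sense adapted to the $d$-dimensional measure $\mu_{S}$ rather than to Lebesgue measure on $\mathbb{R}^{n}$ --- by $C^{1}$ maps, and controlling the resulting error chain with the elementary estimate $\mass(g_{\sharp}R)\le\Lip(g)^{d}\mass(R)$ recalled in the introduction.

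\emph{Step 1 (reduction to pieces lying in $C^{1}$ submanifolds).} Write $S$ as a rectifiable chain carried by a $d$-rectifiable set $M_{0}$ with $G$-valued multiplicity $\theta$, so that $\mu_{S}=|\theta|\,\HM^{d}\mr M_{0}$, and decompose $M_{0}=N\cup\bigcup_{i\ge 1}M_{i}$ with $\HM^{d}(N)=0$, the sets $M_{i}$ pairwise disjoint and compact, and each $M_{i}$ contained in an embedded $C^{1}$ $d$-submanifold $W_{i}\subset\mathbb{R}^{n}$. Put $S_{i}=S\mr M_{i}\in\RC_{d}(\mathbb{R}^{n};G)$, so that $\mu_{S_{i}}=\mu_{S}\mr M_{i}$, $\spt S_{i}\subseteq M_{i}$, and $\sum_{i}\mass(S_{i})=\mass(S)<\infty$; since $\mass\bigl(S-\sum_{i\le N}S_{i}\bigr)=\sum_{i>N}\mass(S_{i})\to 0$ and $f_{\sharp}$ is Lipschitz with constant $\Lip(f)^{d}$ for the mass (semi)norm, it follows that $f_{\sharp}S=\sum_{i}f_{\sharp}S_{i}$ and $\mass(f_{\sharp}S)\le\sum_{i}\mass(f_{\sharp}S_{i})$. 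As $\mu_{S}$ is concentrated on $M_{0}$ and $\ap J_{d}(f|_{M})(x)$ is computed from the approximate tangent plane of this $d$-rectifiable measure at $x$ --- namely $T_{x}M_{0}=T_{x}W_{i}$ for $\mu_{S_{i}}$-a.e.\ $x\in M_{i}$ --- one has $\ap J_{d}(f|_{M})(x)=\ap J_{d}(f|_{M_{i}})(x)$ for $\mu_{S_{i}}$-a.e.\ $x$. It thus suffices to prove $\mass(f_{\sharp}S_{i})\le\int_{M_{i}}\ap J_{d}(f|_{M_{i}})\,d\mu_{S_{i}}$ for every $i$ and then sum.

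\emph{Step 2 (a $C^{1}$ approximation and a splitting on one piece).} Fix $i$ and $\epsilon>0$. Reading $f|_{W_{i}}$ in a $C^{1}$ chart and applying the classical Lusin-type $C^{1}$ approximation theorem for Lipschitz maps on $\mathbb{R}^{d}$ followed by a Whitney extension, we produce a compact set $A_{\epsilon}\subseteq M_{i}$ and a map $g_{\epsilon}\in C^{1}(\mathbb{R}^{n},\mathbb{R}^{n})$ such that $\HM^{d}(M_{i}\setminus A_{\epsilon})$, and hence --- by absolute continuity of the finite measure $\mu_{S_{i}}$ with respect to $\HM^{d}\mr M_{i}$ --- also $\mu_{S_{i}}(M_{i}\setminus A_{\epsilon})$, is less than $\epsilon$, while $g_{\epsilon}=f$ on $A_{\epsilon}$ and the tangential differentials of $g_{\epsilon}|_{W_{i}}$ and $f|_{W_{i}}$ agree at $\HM^{d}$-a.e.\ point of $A_{\epsilon}$; in particular $\ap J_{d}(g_{\epsilon}|_{M_{i}})(x)=\ap J_{d}(f|_{M_{i}})(x)$ for $\mu_{S_{i}}$-a.e.\ $x\in A_{\epsilon}$. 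Now split $S_{i}=S_{i}'+R_{\epsilon}$ with $S_{i}'=S_{i}\mr A_{\epsilon}$ and $R_{\epsilon}=S_{i}\mr(M_{i}\setminus A_{\epsilon})$: then $\mass(R_{\epsilon})=\mu_{S_{i}}(M_{i}\setminus A_{\epsilon})<\epsilon$, $\mu_{S_{i}'}=\mu_{S_{i}}\mr A_{\epsilon}$, and, $A_{\epsilon}$ being closed, $\spt S_{i}'\subseteq A_{\epsilon}$.

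\emph{Step 3 (applying the $C^{1}$ inequality and letting $\epsilon\to 0$).} Because $f=g_{\epsilon}$ on $A_{\epsilon}\supseteq\spt S_{i}'$ and their tangential differentials agree $\HM^{d}$-a.e.\ on $A_{\epsilon}$, and because the image of a rectifiable chain under a Lipschitz map is determined, through the area formula, by the restriction of the map and of its tangential differential to a carrier, we have $f_{\sharp}S_{i}'=(g_{\epsilon})_{\sharp}S_{i}'$. Hence, using subadditivity of mass, the Lipschitz mass bound for $R_{\epsilon}$, the $C^{1}$ inequality \eqref{Chan} of \cite{Liu:2021} applied to $g_{\epsilon}$, $S_{i}'$ and the $d$-rectifiable set $M_{i}$ (which carries $\mu_{S_{i}'}$), and Step 2,
\[
\mass(f_{\sharp}S_{i})\le\mass\bigl((g_{\epsilon})_{\sharp}S_{i}'\bigr)+\Lip(f)^{d}\epsilon\le\int_{A_{\epsilon}}\ap J_{d}(g_{\epsilon}|_{M_{i}})\,d\mu_{S_{i}'}+\Lip(f)^{d}\epsilon\le\int_{M_{i}}\ap J_{d}(f|_{M_{i}})\,d\mu_{S_{i}}+\Lip(f)^{d}\epsilon.
\]
Letting $\epsilon\downarrow 0$ gives $\mass(f_{\sharp}S_{i})\le\int_{M_{i}}\ap J_{d}(f|_{M_{i}})\,d\mu_{S_{i}}$, and summing over $i$ as in Step 1 yields $\mass(f_{\sharp}S)\le\sum_{i}\int_{M_{i}}\ap J_{d}(f|_{M_{i}})\,d\mu_{S_{i}}=\int_{M}\ap J_{d}(f|_{M})\,d\mu_{S}$.

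\emph{Expected main difficulty.} The heart of the matter is Step 2: one needs a $C^{1}$ map agreeing with $f$ on a set that is large for the $d$-dimensional measure $\mu_{S}$ (not for $\mathcal{L}^{n}$), with simultaneous control of the values and of the tangential derivatives there --- which is exactly why the reduction to subsets of $C^{1}$ submanifolds is made in Step 1. The remaining points are essentially bookkeeping: the $\mu_{S}$-a.e.\ identifications among $\ap J_{d}(f|_{\spt S})$, $\ap J_{d}(f|_{M_{0}})$, $\ap J_{d}(f|_{M_{i}})$ and $\ap J_{d}(g_{\epsilon}|_{M_{i}})$, all of which reduce to coincidence of approximate tangent planes at $\mu_{S}$-typical points, and the need to choose $A_{\epsilon}$ closed so that $f_{\sharp}S_{i}'=(g_{\epsilon})_{\sharp}S_{i}'$ holds exactly rather than up to a small mass error.
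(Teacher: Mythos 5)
Your argument is correct in outline, but it takes a genuinely different route from the paper's. The paper approximates the \emph{chain}: using De Pauw's approximation theorem (Theorem \ref{bijin}) it writes ${g_{\varepsilon}}_{\sharp}S$ as a polyhedral chain $Q_{\varepsilon}$ plus an error of mass at most $\varepsilon$, applies the polyhedral-chain case of the inequality from \cite{Liu:2021} (valid for arbitrary Lipschitz maps) to $f\circ g_{\varepsilon}^{-1}$ acting on $Q_{\varepsilon}$, and then passes to the limit via the total-variation estimate $\|{g^{-1}_{\varepsilon}}_{\sharp}\mu_{Q_{\varepsilon}}-\mu_S\|_{TV}\le(1+2^{d}\mass(S))\varepsilon$ of Lemma \ref{quan}. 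You instead approximate the \emph{map}: you decompose a rectifiable carrier of $S$ into compact pieces sitting inside $C^{1}$ submanifolds, replace $f$ on each piece by a globally $C^{1}$ map agreeing with $f$, together with its tangential differential, off a set of small $\mu_S$-measure, and invoke the $C^{1}$ case \eqref{Chan} of \cite{Liu:2021}. Both reductions are legitimate. The paper's route needs no structure theory for rectifiable $G$-chains beyond the definition and concentrates its heavy lifting in the explicitly quoted approximation theorem, at the price of the chain rule for approximate Jacobians along $\spt Q_{\varepsilon}$ and the total-variation lemma; your route is closer to Federer's classical treatment and yields the formally sharper bound with $\ap J_{d}(f|_{M_{0}})$ for a $d$-rectifiable carrier $M_{0}$ (which is at most $\ap J_{d}(f|_{\spt S})$ at $\mu_S$-a.e.\ point, so the stated theorem follows), at the price of two black boxes that carry real weight: the representation $\mu_S=|\theta|\,\HM^{d}\mr M_{0}$ and rectifiability of restrictions for general complete coefficient groups $G$, and the Step 2 construction, where the Whitney extension to a $C^{1}$ map of all of $\mathbb{R}^{n}$ must be done through Whitney's jet condition directly (a $C^{1}$ submanifold admits no $C^{1}$ tubular retraction), which is standard but is only sketched. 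One further point to make explicit: the identity $f_{\sharp}S_{i}'=(g_{\epsilon})_{\sharp}S_{i}'$ cannot be deduced from the homotopy estimate for flat chains, since $\partial S_{i}'$ may have infinite mass; it requires the locality of the pushforward on rectifiable chains (dependence only on the restriction of the map to a carrier), which does hold and should be cited.
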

In order to prove this theorem, we collect some definitions and basic results that we use throughout the paper. For further facts we refer the reader to 
the books by Royden \cite{royden2017real}, Whitney \cite{Whitney:1957}, Federer \cite{Federer:1969} and the article by  Fleming \cite{Fleming:1966}. 

Let $d$ be a positive integer.
For any set $E\subseteq \mathbb{R}^n$, the  
{\it $d$-dimensional Hausdorff measure}
$\HM^d(E)$ is defined by 
\[
\HM^d(E)=\lim\limits_{\delta\to0}\inf
\left\{\sum\diam(U_{i})^d:E \subseteq\bigcup
U_{i}, U_i\subseteq\mathbb{R}^n, \diam(U_i)\leq\delta\right\}.
\]

A {\it normed abelian group} is an abelian group $G$ equipped with a norm $|.|:G\to[0,\infty)$ satisfying
\begin{enumerate}
\item $\left|-g\right|=\left|g\right|$,
\item $\left|g+h\right|\leq\left|g\right|+\left|h\right|$,
\item $\left|g\right|=0\text{ if and only if }g=0$.
\end{enumerate}
A normed abelian group $G$ is {\it complete} if it is complete 
with respect to the metric induced by the norm.

The group of 
{\it polyhedral chains} of dimension $d$ 
in $\mathbb{R}^n$, with coefficients in $G$, denoted by  $\PC_d(\mathbb{R}^n;G)$, is a collection of elements consist of
$\sum_{i=1}^n g_i\Delta_i$ with $g_i\in G$ 
and $\Delta_i$
are interior disjoint polyhedra of dimension $d$. The {\it mass} of $P$ 
is defined by
\[
\mass(P)=\sum\limits_{i=1}^{n}\left|g_i\right|
\HM^d(\Delta_i).
\]
The  {\it Whitney flat norm} on $\PC_d(\mathbb{R}^n;G)$ 
is defined by 
\[
\fn(P)=\inf\left\{\mass(Q)+\mass(R):P=Q+\partial
R,Q\in\PC_d(\mathbb{R}^n;G),R\in\PC_{d+1}
(\mathbb{R}^n;G)\right\}.
\]
The group of {\it flat chains} of dimension $d$, $\FC_d(\mathbb{R}^n;G)$, is the completion of 
$\PC_d(\mathbb{R}^n;G)$ with respect to the Whitney flat norm $\fn$. For any $S\in\FC_d(\mathbb{R}^n;G)$, we denote by $P_i\ora\fn S$ a sequence of polyhedral chains $\{P_i\}$ converging to $S$ in Whitney flat norm
$\fn$. Then the mass of $S$ is defined by 
\[
\mass(S)=\inf\left\{\liminf\limits_{i\to\infty}
\mass(P_i):P_i\ora\fn S,P_i\in\PC_d(\mathbb{R}^n;G)
\right\}.
\]

Associated to every finite mass $d$-chain $S$ and Borel set $E$, there
is a finite mass $d$-chain $S\mr E$ that is, roughly speaking, the portion 
of $S$ in $E$. Then $E\mapsto\mass(S\mr E)$ defines {\it a Radon measure $\mu_S$} such that 
\[
\mu_S(E)=\mass(S\mr E).
\]
A flat chain $S$ is
supported by a closed set $X$ if for every open set $U$
containing $X$, there is a sequence of polyhedral chains 
$\left\{P_i\right\}$ tending to $S$ with cells of each $P_i$ contained in $U$. The {\it support of $S$}, denoted by $\spt S$, is the smallest closed set $X$ supporting $S$, if it exits.

A Lipschitz map $f:\mathbb{R}^n\to\mathbb{R}^n$ induceds a homomorphic chain map $f_{\sharp}:\FC_d(\mathbb{R}^n;G)\to\FC_d(\mathbb{R}^n;G)$. If $P$ is 
a polyhedral $d$-chain, then $f_{\sharp}P$ is a Lipschitz $d$-chain, defined in \cite[p.297]{Whitney:1957} by approximating $f$ by piecewise-linear functions.

A flat chain $S\in\FC_d(\mathbb{R}^n;G)$ is called 
{\it rectifiable} if for each
$\varepsilon>0$ there exists a polyhedral $d$-chain 
$P\in \PC_d(\mathbb{R}^n;G)$ 
and a Lipschitz map 
$f:\mathbb{R}^n\to\mathbb{R}^n$ such that 
\[
\mass(S-f_{\sharp}P)<\varepsilon.
\]
We denote by $\RC_d(\mathbb{R}^n;G)$ the collection of 
all rectifiable $d$-chains.

For any Lipschitz mapping $f: X\to\mathbb{R}^n$, $X\subseteq \mathbb{R}^n$, the approximate $d$ dimensional Jacobian of $f$ is
defined by the formula
\[
\ap J_df(x)=||\wedge_d\ap Df(x)||
\]
whenever $f$ is approximately differentiable at $x$.

\section{Proof of theorem \ref{theorem1}}
First, we will introduce the
following approximation theorem in \cite{PauwV:2018} by 
De Pauw.
\begin{theorem}(Approximation theorem)\label{bijin}
Let $S\in\RC_d(\mathbb{R}^n;G)$ and $\varepsilon>0$. There exist
$Q\in\PC_d(\mathbb{R}^n;G)$ and $g:\mathbb{R}^n\to\mathbb{R}^n$ a 
diffeomorphism of class $C^1$ with the following properties:
\begin{enumerate}
	\item $\max\{\Lip(g),\Lip(g^{-1})\}\leq1+\varepsilon$,
	\item $|g(x)-x|\leq\varepsilon$  for verery $x\in\mathbb{R}^n$,
	\item $g(x)=x$ whenever $\dist(x,\spt S)\ge\varepsilon$,
	\item $\mass(Q-g_{\sharp}S)\leq\varepsilon$.
\end{enumerate}
\end{theorem}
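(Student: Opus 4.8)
I would prove the statement by representing $S$ through its rectifiable carrier, flattening finitely many nearly affine $C^1$ pieces of that carrier with near-identity diffeomorphisms, patching these into a single global map $g$, and obtaining the polyhedral chain $Q$ as a mass approximation of $g_{\sharp}S$ on the resulting union of $d$-planes. Concretely, write $S$ as the rectifiable $d$-chain carried by a $d$-rectifiable set $E\subseteq\mathbb{R}^n$ with a measurable orientation $\xi$ and a $G$-valued multiplicity $\theta$, so that $\mu_S=|\theta|\,\HM^d\mr E$ and $\mass(S)=\int_E|\theta|\,\ud\HM^d<\infty$. Since every requested property is local and the mass is an integral, the first reduction is to discard a portion of $S$ of mass at most a small threshold $\delta=\delta(\varepsilon)$, after which only a compact piece of $E$ on which $|\theta|$ is bounded remains to be treated.

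Using the structure theorem for rectifiable sets, together with Lusin's theorem applied to the approximate derivative of a Lipschitz parametrization, I would then cover $\HM^d$-almost all of the remaining carrier, up to a further set of mass $\le\delta$, by finitely many pairwise disjoint compact pieces $K_1,\dots,K_N$, each contained in the graph of a $C^1$ function over some affine $d$-plane $\pi_i$, and each so small that the tangent planes of $K_i$ deviate from $\pi_i$ by less than a prescribed angle $\eta=\eta(\varepsilon)$. Disjointness and compactness let me fix pairwise disjoint open neighborhoods $U_i\supseteq K_i$ contained in the $\varepsilon$-neighborhood of $\spt S$.

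On each piece I would construct a $C^1$ diffeomorphism $g_i$ of $\mathbb{R}^n$, equal to the identity outside $U_i$, that maps the graph carrying $K_i$ into $\pi_i$; because that graph is $\eta$-flat, $g_i$ can be taken $C^1$-close to the identity, and choosing $\eta$ small enough yields $\max\{\Lip(g_i),\Lip(g_i^{-1})\}\le 1+\varepsilon$ and $|g_i(x)-x|\le\varepsilon$. Setting $g=g_i$ on $U_i$ and $g=\id$ elsewhere defines a single map; since the $U_i$ are disjoint and $g$ is $C^1$-close to the identity (whence $\|Dg-I\|<1$ and $g$ is proper), $g$ is a global $C^1$ diffeomorphism. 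Properties (1) and (2) then follow from the bounds on each $g_i$, and (3) from $\bigcup_i U_i$ lying in the $\varepsilon$-neighborhood of $\spt S$.

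After the pushforward, $g_{\sharp}S$ agrees, up to a chain whose mass is at most $(1+\varepsilon)^d$ times the discarded mass and hence small once $\delta$ is chosen accordingly, with a rectifiable $d$-chain carried by the finite union $\bigcup_i\pi_i$ of $d$-planes. A rectifiable chain lying in a single $d$-plane with integrable $G$-valued multiplicity can be approximated in mass by a polyhedral chain in that plane, by approximating $\theta$ with $G$-valued simple functions adapted to a fine cubical grid; performing this on each $\pi_i$ and summing produces $Q\in\PC_d(\mathbb{R}^n;G)$ with $\mass(Q-g_{\sharp}S)\le\varepsilon$, which is (4). The step I expect to be the main obstacle is the simultaneous quantitative control in the third paragraph: the single smallness parameter $\eta$ must keep each $g_i$ near the identity, for (1) and (2), while still flattening $K_i$ accurately enough that the final planar approximation respects the mass budget, and the pieces must be shrunk enough for $C^1$ flatness without the neighborhoods $U_i$ escaping the $\varepsilon$-neighborhood of $\spt S$. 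Reconciling these competing demands, and verifying that the patched $g$ stays injective across the interfaces between the $U_i$ and the region where $g=\id$, is the delicate heart of the argument.
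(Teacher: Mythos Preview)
The paper does not prove this theorem; it is quoted verbatim from De Pauw \cite{PauwV:2018} and used as a black box in the proofs of Lemma~\ref{quan} and Theorem~\ref{theorem1}. There is therefore no ``paper's own proof'' to compare against.

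That said, your sketch follows the expected strategy for such an approximation result: decompose the rectifiable carrier into finitely many nearly flat $C^1$ graph pieces (via the structure theorem and Lusin/Whitney), straighten each piece onto its base $d$-plane with a compactly supported near-identity $C^1$ diffeomorphism, glue these into a global $g$, and finally approximate the planar pushforward in mass by polyhedral chains. This is essentially the line De Pauw takes, so your plan is on the right track. The points you flag as delicate are indeed the real work: choosing the flatness parameter $\eta$ so that the local straightening maps satisfy the Lipschitz bound $1+\varepsilon$ uniformly, ensuring the neighborhoods $U_i$ are pairwise disjoint and contained in the $\varepsilon$-tube around $\spt S$, and checking that the glued map remains a global diffeomorphism across the interfaces. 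One technical point you may want to sharpen is the claim that a rectifiable $G$-chain supported in a single $d$-plane with $\HM^d$-integrable $G$-multiplicity is mass-approximable by polyhedral chains; this requires the separability (or at least the completeness and a density argument in the image of $\theta$) of $G$ together with Lusin's theorem for $G$-valued functions, and should be stated as a lemma rather than asserted.
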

A signed measure $\mu$ on the measurable space $(\mathbb{R}^n,\mathcal{M})$ can be decomposed into two mutually singular
measure $\mu^{+}$ and $\mu^{-}$ on $(\mathbb{R}^n,\mathcal{M})$ such that $\mu=\mu^{+}-\mu^{-}$, where $\mathcal{M}$ is a $\sigma$-algebra of subsets of $\mathbb{R}^n$. The measure $|\mu|$is defined on $\mathcal{M}$ by 
\[
|\mu|(E)=\mu^{+}(E)+\mu^{-}(E) \text{ for all }E\in\mathcal{M}.
\]
$|\mu|(\mathbb{R}^n)$ is called the total variation of $\mu$
and denoted by $||\mu||_{TV}$, that is,
\[
||\mu||_{TV}=|\mu|(\mathbb{R}^n)=\sup\sum_{k=1}^{m}|\mu(E_k)|,
\]
where the supremum is taken over all finite disjoint collection $\{E_k\}_{k=1}^m$ of measurable subsets of $\mathbb{R}^n$.

We start with the proof of the following result on the total variation of measure.
\begin{lemma}\label{quan}
	Let $S\in\RC_d(\mathbb{R}^n;G)$. For any $0<\varepsilon<1$, 
there exist $Q_{\varepsilon}\in\PC_d(\mathbb{R}^n;G)$ and
$g_{\varepsilon}:\mathbb{R}^n\to\mathbb{R}^n$ 
diffeomorphism of class $C^1$ such that
\[
||{g^{-1}_{\varepsilon}}_{\sharp}\mu_{Q_{\varepsilon}}-\mu_S||_{TV}\leq(1+2^d\mass(S))\varepsilon
\]
\end{lemma}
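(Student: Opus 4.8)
The plan is to use the Approximation Theorem (Theorem \ref{bijin}) to produce, for the given $\varepsilon$, a polyhedral chain $Q_\varepsilon$ and a $C^1$ diffeomorphism $g_\varepsilon$ with the four listed properties, and then to estimate the total variation norm of the difference $\|{g^{-1}_\varepsilon}_\sharp\mu_{Q_\varepsilon}-\mu_S\|_{TV}$ by inserting the intermediate measure ${g^{-1}_\varepsilon}_\sharp\mu_{g_{\varepsilon\sharp}S}$. By the triangle inequality for the total variation norm,
\[
\|{g^{-1}_\varepsilon}_\sharp\mu_{Q_\varepsilon}-\mu_S\|_{TV}\leq\|{g^{-1}_\varepsilon}_\sharp\mu_{Q_\varepsilon}-{g^{-1}_\varepsilon}_\sharp\mu_{g_{\varepsilon\sharp}S}\|_{TV}+\|{g^{-1}_\varepsilon}_\sharp\mu_{g_{\varepsilon\sharp}S}-\mu_S\|_{TV}.
\]
I would bound the two terms separately: the first measures the defect coming from the approximation $\mass(Q_\varepsilon-g_{\varepsilon\sharp}S)\le\varepsilon$, and the second measures the defect coming from the fact that $g_\varepsilon$ is only close to the identity, not equal to it.

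For the first term, I would use that pushforward by $g_\varepsilon^{-1}$ does not increase total variation by more than a factor governed by $\Lip(g_\varepsilon^{-1})^d\le(1+\varepsilon)^d$ (since mass of a $d$-chain scales by at most the $d$-th power of the Lipschitz constant under a chain map), together with the fact that $\|\mu_{Q_\varepsilon}-\mu_{g_{\varepsilon\sharp}S}\|_{TV}\le\mass(Q_\varepsilon-g_{\varepsilon\sharp}S)\le\varepsilon$; here one uses that for finite-mass chains $A,B$ one has $\mu_A(E)-\mu_B(E)=\mass(A\mr E)-\mass(B\mr E)\le\mass((A-B)\mr E)\le\mass(A-B)$, so summing over a disjoint partition gives the total-variation estimate. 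This contributes a term of size at most $(1+\varepsilon)^d\varepsilon$, which for $0<\varepsilon<1$ is absorbed into $(1+2^d\mass(S))\varepsilon$ up to adjusting constants, or one may simply track it explicitly.

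For the second term, I would compare ${g^{-1}_\varepsilon}_\sharp\mu_{g_{\varepsilon\sharp}S}$ with $\mu_S$ directly. Since ${g_\varepsilon}_\sharp$ followed by ${g_\varepsilon^{-1}}_\sharp$ is the identity on chains, the natural expectation is that ${g^{-1}_\varepsilon}_\sharp\mu_{g_{\varepsilon\sharp}S}$ and $\mu_S$ are both Radon measures assigning mass to Borel sets, and they differ only because the flat-chain mass is not exactly preserved under a bi-Lipschitz map with constants $1+\varepsilon$. Using $\mu_{g_{\varepsilon\sharp}S}(E)=\mass(g_{\varepsilon\sharp}S\mr E)=\mass(g_{\varepsilon\sharp}(S\mr g_\varepsilon^{-1}(E)))\le(1+\varepsilon)^d\mu_S(g_\varepsilon^{-1}(E))$, and the reverse inequality with $(1+\varepsilon)^{-d}$ (or $(1+\varepsilon)^d$ applied to $g_\varepsilon^{-1}$), I would get that for every Borel $E$,
\[
\bigl|{g^{-1}_\varepsilon}_\sharp\mu_{g_{\varepsilon\sharp}S}(E)-\mu_S(E)\bigr|=\bigl|\mu_{g_{\varepsilon\sharp}S}(g_\varepsilon(E))-\mu_S(E)\bigr|\le\bigl((1+\varepsilon)^d-1\bigr)\mu_S(E).
\]
Summing over a finite disjoint partition and using $\mu_S(\mathbb{R}^n)=\mass(S)$ gives the bound $((1+\varepsilon)^d-1)\mass(S)$ for this term, and the elementary estimate $(1+\varepsilon)^d-1\le 2^d\varepsilon$ for $0<\varepsilon<1$ converts this to $2^d\mass(S)\varepsilon$. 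Adding the two contributions and bounding $(1+\varepsilon)^d\varepsilon\le 2^d\varepsilon$ if needed, or more carefully keeping the leading term as $\varepsilon$, yields $\|{g^{-1}_\varepsilon}_\sharp\mu_{Q_\varepsilon}-\mu_S\|_{TV}\le(1+2^d\mass(S))\varepsilon$.

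The main obstacle I anticipate is making precise the interaction between pushforward of \emph{measures} and pushforward of \emph{chains} — in particular justifying the identities $\mu_{g_\sharp S}(E)=\mass(g_\sharp(S\mr g^{-1}(E)))$ and $({g^{-1}}_\sharp\mu_{Q})(E)=\mu_Q(g(E))$, which require that the chain operation $\mr$ commutes appropriately with $g_\sharp$ for a $C^1$ diffeomorphism, and that $g$ maps Borel sets to Borel sets with the expected behaviour. Once the bookkeeping of how mass transforms under a bi-Lipschitz homeomorphism with constant $1+\varepsilon$ is set up, the remaining estimates are the two routine triangle-inequality steps above. I would also need to confirm that $g_\varepsilon(E)$ ranges over all Borel sets as $E$ does, so that taking the supremum over partitions is unaffected by the change of variables.
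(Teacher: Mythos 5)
Your proposal follows essentially the same route as the paper: the triangle inequality at the intermediate measure ${g^{-1}_{\varepsilon}}_{\sharp}\mu_{{g_{\varepsilon}}_{\sharp}S}$, with the first term controlled by $\mass(Q_{\varepsilon}-{g_{\varepsilon}}_{\sharp}S)\leq\varepsilon$ and the second by the two-sided bi-Lipschitz mass distortion giving $((1+\varepsilon)^d-1)\mass(S)\leq 2^d\mass(S)\varepsilon$. The only refinement needed to hit the exact stated constant is the point you already flag at the end: pushing forward a \emph{measure} under the bijection $g_{\varepsilon}^{-1}$ preserves total variation exactly (the partitions $\{g_{\varepsilon}(E_k)\}$ range over all Borel partitions), so the first term is $\leq\varepsilon$ with no extra factor $(1+\varepsilon)^d$, which is precisely how the paper argues.
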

\begin{proof}
Since $S\in\RC_d(\mathbb{R}^n;G)$, by Lemma \ref{bijin}, we get that for any $0<\varepsilon<1$, there exist $Q_{\varepsilon}\in\PC_d(\mathbb{R}^n;G)$ and 
$g_{\varepsilon}:\mathbb{R}^n\to\mathbb{R}^n$ diffeomorphism of class $C^1$ such that 
\[
\max\{\Lip(g_{\varepsilon}), \Lip(g_{\varepsilon}^{-1})\}\leq1+\varepsilon, \mass(Q_{\varepsilon}-{g_{\varepsilon}}_{\sharp}S)\leq\varepsilon.
\]
Thus for any measurable set $E$,
\[
\mass({g_{\varepsilon}}_{\sharp}(S\mr E))\leq\Lip(g_{\varepsilon})^d\mass(S\mr E)\leq
(1+\varepsilon)^d\mass(S\mr E),
\]
\[
\mass(S\mr E)=\mass({g^{-1}_{\varepsilon}}_{\sharp}({g_{\varepsilon}}_{\sharp}(S\mr E)))\leq
(1+\varepsilon)^d\mass({g_{\varepsilon}}_{\sharp}(S\mr E)),
\]
that is,
\[
(1/(1+\varepsilon)^d-1)\mass(S\mr E)\leq\mass({g_{\varepsilon}}_{\sharp}(S\mr E))-\mass(S\mr E)\leq((1+\varepsilon)^d-1)\mass(S\mr E).
\]
Since 
\[
1-1/(1+\varepsilon)^d=((1+\varepsilon)^d-1)/(1+\varepsilon)^d<(1+\varepsilon)^d-1,
\]
thus
\[
\big|\mass({g_{\varepsilon}}_{\sharp}(S\mr E))-\mass(S\mr E)\big|\leq((1+\varepsilon)^d-1)\mass(S\mr E).
\]
We see that
\[\begin{aligned}
||{g^{-1}_{\varepsilon}}_{\sharp}\mu_{Q_{\varepsilon}}-\mu_S||_{TV}&\leq||{g^{-1}_{\varepsilon}}_{\sharp}(\mu_{Q_{\varepsilon}}-\mu_
{{g_{\varepsilon}}_{\sharp}S})||_{TV}+||{g^{-1}_{\varepsilon}}_{\sharp}\mu_{{g_{
			\varepsilon}}_{\sharp}S}-\mu_S||_{TV}
\\&=\big|{g^{-1}_{\varepsilon}}_{\sharp}(\mu_{Q_{\varepsilon}}-\mu_{{g_{\varepsilon}}_{\sharp}S})\big|(\mathbb{R}^n)+
\big|{g^{-1}_{\varepsilon}}_{\sharp}\mu_{g_{\varepsilon\sharp}S}-\mu_S\big|(\mathbb{R}^n)\\&=
\sup\sum_{k=1}^{m}\big|{g^{-1}_{\varepsilon}}_{\sharp}(\mu_{Q_{\varepsilon}}-\mu_{{g_{
			\varepsilon}}_{\sharp}S})(E_k)\big|+\sup\sum_{k=1}^m
\big|({g^{-1}_{\varepsilon}}_{\sharp}\mu_{{g_{\varepsilon}}_{\sharp}S}-\mu_S)(E_k)\big|\\&
=\sup\sum_{k=1}^{m}\big|(\mu_{Q_{\varepsilon}}-\mu_{{g_{\varepsilon}}_{\sharp}S})
(E_k')\big|+\sup\sum_{k=1}^m\big|\mu_{{g_{\varepsilon}}_{\sharp}S}(E_k')-\mu_S(E_k)\big|\\&
=\sup\sum_{k=1}^{m}\big|\mass(Q_{\varepsilon}\mr E_k')-\mass(({g_{\varepsilon}}_{\sharp}S)\mr E_k')\big|+\sup\sum_{k=1}^m\big|\mass({g_{\varepsilon}}_{\sharp}(S\mr E_k))-\mass(S\mr E_k)\big|\\&
\leq\sup\sum_{k=1}^m\mass((Q_{\varepsilon}-{g_{\varepsilon}}_{\sharp}S)\mr E_k')+\sup\sum_{k=1}^m((1+\varepsilon)^d-1)\mass(S\mr E_k)
\\&\leq\mass(Q_{\varepsilon}-{g_{\varepsilon}}_{\sharp}S)+
((1+\varepsilon)^d-1)\mass(S)\\&
\leq\varepsilon+2^d\mass(S)\varepsilon,
\end{aligned}\]
where the supremum is taken over all finite disjoint collection 
$\{E_k\}_{k=1}^m$ of measurable subsets of $\mathbb{R}^n$,
$E_k'=g_{\varepsilon}(E_k)$. 
\end{proof}
We are now in a position to prove the main theorem of this paper.
\begin{proof}[Proof of Theorem \ref{theorem1}]
Let $g_{\varepsilon}$ be diffeomorphism of class $C^1$ and $Q_{\varepsilon}$ be polyhedral $d$-chain as above Lemma \ref{quan}.
\[
f_{\sharp}S=f_{\sharp}(g_{\varepsilon}^{-1}\circ g_{\varepsilon})_{\sharp}S=f_{\sharp}\circ {g^{-1}_{\varepsilon}}_{\sharp}({g_{\varepsilon}}_{\sharp}S),
\]
\[
{g_{\varepsilon}}_{\sharp}S=Q_{\varepsilon}+{g_{\varepsilon}}_{\sharp}S-Q_{\varepsilon},
\]
thus we get that
\[\begin{aligned}
\mass(f_{\sharp}S)&=\mass(f_{\sharp}\circ {g^{-1}_{\varepsilon}}_{\sharp}({g_{\varepsilon}}_{\sharp}S))
=\mass(f_{\sharp}\circ {g^{-1}_{\varepsilon}}_{\sharp}(Q_{\varepsilon}+{g_{\varepsilon}}_{\sharp}S-Q_{\varepsilon}))\\&
\leq\mass(f_{\sharp}\circ {g^{-1}_{\varepsilon}}_{\sharp}(Q_{\varepsilon}))+\mass(f_{\sharp}\circ {g^{-1}_{\varepsilon}}_{\sharp}({g_{\varepsilon}}_{\sharp}S-Q_{\varepsilon})).
\end{aligned}
\]
Since
\[
\big|\big|\wedge_d\ap D(g_{\varepsilon}^{-1}|_{\spt Q_{\varepsilon}})(x)\big|\big|\leq\big|\big| \ap D(g_{\varepsilon}^{-1}|_{\spt Q_{\varepsilon}})(x)\big|\big|^d\leq\Lip
(g_{\varepsilon}^{-1})^d\leq(1+\varepsilon)^d,
\]
we know that 
\[\begin{aligned}
\mass(f_{\sharp}\circ {g^{-1}_{\varepsilon}}_{\sharp}(Q_{\varepsilon}))&\leq\int_{\spt Q_{\varepsilon}}\ap J_d((f\circ g_{\varepsilon}^{-1})|_{\spt Q_{\varepsilon}})(x)d\mu_{Q_{\varepsilon}}(x)\\&
=\int_{\spt Q_{\varepsilon}}\ap J_d(f|_{g_{\varepsilon}^{-1}(\spt Q)})(g_{\varepsilon}^{-1}(x))\cdot\ap J_d(g_{\varepsilon}^{-1}|_{\spt Q_{\varepsilon}})(x)d\mu_{Q_{\varepsilon}}(x)\\&\leq
\int_{\spt Q_{\varepsilon}}\big|\big|\wedge_d\ap D(f|_{g_{\varepsilon}^{-1}(\spt Q)})(g_{\varepsilon}^{-1}(x))\big|\big|\cdot\big|\big|\wedge_d \ap D(g_{\varepsilon}^{-1}|_{\spt Q_{\varepsilon}})(x)\big|\big|d\mu_{Q_{\varepsilon}}(x)\\&\leq
(1+\varepsilon)^d\int_{\spt Q_{\varepsilon}}\big|\big|\wedge_d\ap D(f|_{g_{\varepsilon}^{-1}(\spt Q)})(g_{\varepsilon}^{-1}(x))\big|\big|d\mu_{Q_{\varepsilon}}(x)\\&
=(1+\varepsilon)^d\int_{g_{\varepsilon}^{-1}(\spt Q_{\varepsilon})}\ap J_d(f|_{
	g_{\varepsilon}^{-1}(\spt Q_{\varepsilon})})(x)d{g_{\varepsilon}^{-1}}_{\sharp}\mu_{Q_{\varepsilon}}(x),
\end{aligned}\]
Set 
\[
M_{\varepsilon}=g_{\varepsilon}^{-1}(\spt Q_{\varepsilon}), M_S=\spt S, \mu_{\varepsilon}={g_{\varepsilon}^{-1}}_{\sharp}
\mu_{Q_{\varepsilon}},
\]
\[\begin{aligned}
\int_{M_{\varepsilon}}\ap J_d(f|_{M_{\varepsilon}})(x)
d\mu_{\varepsilon}(x)&\leq\int_{\mathbb{R}^n}\ap J_d(f|_{M_{\varepsilon}})d(\mu_{\varepsilon}-\mu_S)(x)+\int_{\mathbb{R}^n}\ap J_d(f|_{M_{\varepsilon}})d\mu_S(x)\\&\leq
\int_{\mathbb{R}^n}\ap J_d(f|_{M_{\varepsilon}})d|\mu_{\varepsilon}-\mu_S|(x)+\int_{\mathbb{R}^n}\ap J_d(f|_{M_S})d\mu_S(x).
\end{aligned}\]
By Lemma \ref{quan}, 
\[\begin{aligned}
\int_{\mathbb{R}^n}\ap J_d(f|_{M_{\varepsilon}})d|\mu_{\varepsilon}-\mu_S|(x)&\leq\Lip(f)^d
|\mu_{\varepsilon}-\mu_S|(\mathbb{R}^n)\\&\leq\Lip(f)^d(1+
2^d\mass(S))\varepsilon.
\end{aligned}\] 
Hence $\varepsilon\to0$, 
\[
\mass(f_{\sharp}\circ {g^{-1}_{\varepsilon}}_{\sharp}(Q_{\varepsilon}))\leq\int_{\mathbb{R}^n}\ap J_d(f|_{M_S})d\mu_S(x)=\int_{M_S}\ap J_d(f|_{M_S})d\mu_S(x).
\]
Moreover, when $\varepsilon\to0$,
\[
\mass(f_{\sharp}\circ {g^{-1}_{\varepsilon}}_{\sharp}({g_{\varepsilon}}_{\sharp}S-Q_{\varepsilon}))\leq\Lip(f)^d(1+\varepsilon)^d\mass({g_{\varepsilon}}_{\sharp}S-Q_{\varepsilon})\leq\Lip(f)^d(1+\varepsilon)^d\varepsilon\to0.
\]
Therefore we get that
\[
\mass(f_{\sharp}S)\leq\int_{M_S}\ap J_d(f|_{M_S})d\mu_S(x).
\]
\end{proof}
\section{Acknowledgement}
I would like to express my gratitude to my supervisor. Dr. Yangqin Fang for fruitful discussion.
\bibliography{1}
\end{document}